\newtheorem{theorem}{Theorem}
\newtheorem{proposition}[theorem]{Proposition}
\begin{document}

\title{A new intrinsically knotted graph with 22 edges}
\author[H. Kim]{Hyoungjun Kim}
\address{Department of Mathematics, Korea University, Seoul 02841, Korea}
\email{kimhjun@korea.ac.kr}
\author[H. J. Lee]{Hwa Jeong Lee}
\address{Department of Mathematical Sciences, KAIST, 291 Daehak-ro, Yuseong-gu, Daejeon 305-701, Korea}
\email{hjwith@kaist.ac.kr}
\author[M. Lee]{Minjung Lee}
\address{Department of Mathematics, Korea University, Seoul 02841, Korea}
\email{mjmlj@korea.ac.kr}
\author[T. Mattman]{Thomas Mattman}
\address{Department of Mathematics and Statistics, California State University, Chico, Chico CA 95929-0525, USA}
\email{TMattman@CSUChico.edu}
\author[S. Oh]{Seungsang Oh}
\address{Department of Mathematics, Korea University, Seoul 02841, Korea}
\email{seungsang@korea.ac.kr}

\thanks{2010 Mathematics Subject Classification: 57M25, 57M27, 05C10}
\thanks{The corresponding author(Seungsang Oh) was supported by the National Research Foundation of Korea(NRF) grant funded by the Korea government(MSIP) (No. NRF-2014R1A2A1A11050999).}
\thanks{This work was supported by the National Research Foundation of Korea(NRF) grant
funded by the Korea government(MEST) (No. 2011-0027989).}

\begin{abstract}
A graph is called intrinsically knotted if every embedding of the graph contains a knotted cycle.
Johnson, Kidwell, and Michael showed that intrinsically knotted graphs have at least 21 edges.
Recently Lee, Kim, Lee and Oh (and, independently, Barsotti and Mattman) 
proved there are exactly 14 intrinsically knotted graphs with 21 edges 
by showing that $H_{12}$ and $C_{14}$ are the only triangle-free intrinsically knotted graphs of size 21.
Our current goal is to find the complete set of intrinsically knotted graphs with 22 edges.
To this end, using the main argument in~\cite{LKLO},
we seek triangle-free intrinsically knotted graphs.
In this paper we present a new intrinsically knotted graph with 22 edges, called $M_{11}$.
We also show that there are exactly three triangle-free intrinsically knotted graphs of size 22
among graphs having at least two vertices with degree 5: 
cousins 94 and 110 of the $E_9+e$ family, and $M_{11}$.
Furthermore, there is no triangle-free intrinsically knotted graph with 22 edges
that has a vertex with degree larger than 5.
\end{abstract}

\maketitle

\section{Introduction} \label{sec:intro}

Throughout the paper we will take an embedded graph to mean a graph embedded in $R^3$.
We call a graph $G$ {\em intrinsically knotted\/}
if every embedding of the graph contains a non-trivially knotted cycle.
Conway and Gordon \cite{CG} showed that $K_7$, the complete graph with seven vertices,
is an intrinsically knotted graph.
Foisy \cite{F} showed that $K_{3,3,1,1}$ is also intrinsically knotted.
A graph $H$ is a {\em minor\/} of another graph $G$
if it can be obtained from $G$ by contracting or deleting some edges.
If a graph $G$ is intrinsically knotted and has no proper minor
that is intrinsically knotted, $G$ is said to be {\em minor minimal intrinsically knotted\/}.
Robertson and Seymour \cite{RS} proved that
there are only finitely many minor minimal intrinsically knotted graphs,
but finding the complete set is still an open problem.
A $\nabla Y$ {\em move\/} is an exchange operation on a graph
that removes all edges of a triangle $abc$ and
then adds a new vertex $v$ and three new edges $va, vb$ and $vc$.
Its reverse operation is called a $Y \nabla$ {\em move\/} as follows:

\begin{figure}[h]
\includegraphics{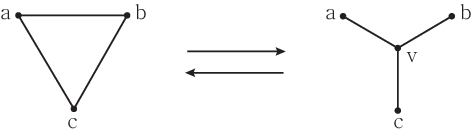}
\end{figure}

Since a $\nabla Y$ move preserves intrinsic knottedness \cite{MRS},
we will concentrate on triangle-free graphs in this paper.
It is known \cite{{CG},{F},{KS},{MRS}} that
$K_7$ and the 13 graphs obtained from $K_7$ by $\nabla Y$ moves,
and $K_{3,3,1,1}$ and the 25 graphs obtained from $K_{3,3,1,1}$ by $\nabla Y$ moves
are minor minimal intrinsically knotted.

Johnson, Kidwell and Michael \cite{JKM} showed that intrinsically knotted graphs have at least 21 edges.
Recently two groups, working independently,
showed that $K_7$ and the 13 graphs obtained from $K_7$ by $\nabla Y$ moves are the only intrinsically
knotted graphs with 21 edges.
This gives us the complete set of 14 minor minimal intrinsically knotted graphs with 21 edges.
Lee, Kim, Lee and Oh \cite{LKLO} proceeded by showing that the only triangle-free
intrinsically knotted graphs with 21 edges are $H_{12}$ and $C_{14}$.
Barsotti and Mattman \cite{BM} relied on connections with 2-apex graphs.

We say that a graph is {\em intrinsically knotted or completely 3-linked}
if every embedding of the graph into the 3-sphere contains
a nontrivial knot or a 3-component link each of whose 2-component sublinks is nonsplittable.
Hanaki, Nikkuni, Taniyama and Yamazaki \cite{HNTY} constructed 20 graphs
derived from $H_{12}$ and $C_{14}$ by $Y \nabla$ moves as in Figure \ref{fig1},
and they showed that these graphs are minor minimal intrinsically knotted or completely 3-linked graphs.
Furthermore they showed that the six graphs $N_9$, $N_{10}$, $N_{11}$, $N'_{10}$, $N'_{11}$ and $N'_{12}$
are not intrinsically knotted.
This has also been shown by Goldberg, Mattman and Naimi independently \cite{GMN}.

\begin{figure}[h]
\includegraphics{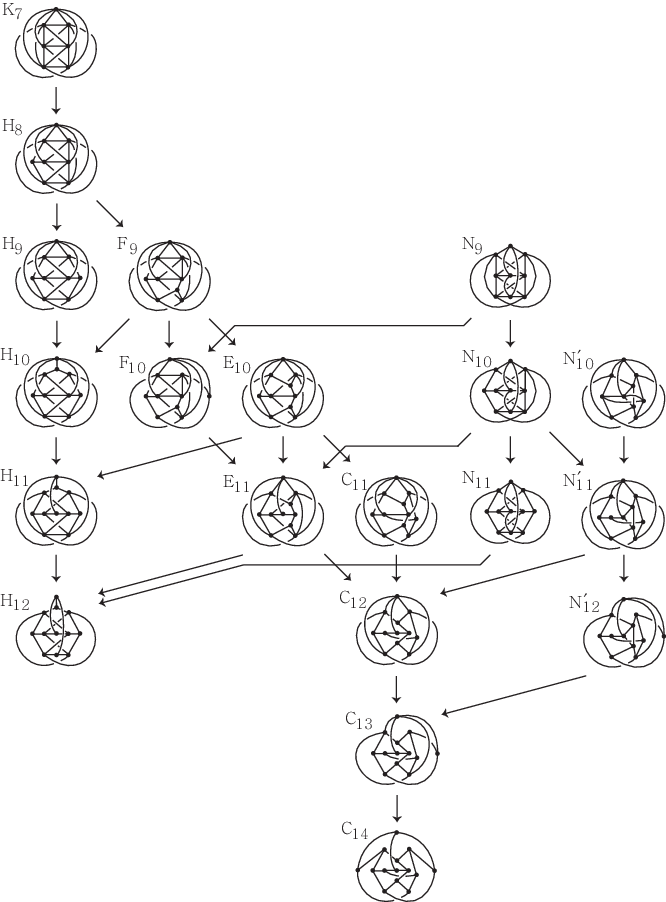}
\caption{$K_7$ and $19$ more relative graphs}
\label{fig1}
\end{figure}

We say two graphs $G$ and $G^{\prime}$ are {\em cousins} of each other
if $G^{\prime}$ is obtained from $G$ by a finite sequence of $\nabla Y$ and $Y \nabla$ moves.
The set of all cousins of $G$ is called the $G$ {\em family}.
The $K_{3,3,1,1}$ family consists of 58 graphs, of which
26 graphs were previously known to be minor minimal intrinsically knotted.
In \cite{GMN} they showed that the remaining 32 graphs are also minor minimal intrinsically knotted.
As in Figure \ref{fig2} the graph $E_9+e$ is obtained from $N_9$ by adding the new edge $e$.
The $E_9+e$ family consists of 110 graphs.
They also showed that all of these graphs are intrinsically knotted,
and exactly 33 of them are minor minimal intrinsically knotted.

\begin{figure}[h]
\includegraphics{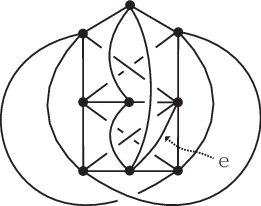}
\caption{$E_9+e$}
\label{fig2}
\end{figure}

By combining the $K_{3,3,1,1}$ family and the $E_9+e$ family,
all 168 graphs were already known to be intrinsically knotted graphs with 22 edges,
and 14 graphs among them are triangle-free.
These triangle-free intrinsically knotted 14 graphs are
the four cousins 29, 31, 42 and 53 in the $K_{3,3,1,1}$ family
and the ten cousins 43, 56, 87, 89, 94, 97, 99, 105, 109 and 110 in the $E_9+e$ family.
Especially, only the two cousins, 94 and 110, of the $E_9+e$ family drawn in Figure \ref{fig3}
have at least two vertices with degree 5 or more.
Indeed if we contract the edges $l_1$ and $l_2$ of the cousins 94 and 110,
we get the minor minimal intrinsically knotted graphs $H_{11}$ and $H_9$
of the $K_7$ family, respectively.

\begin{figure}[h]
\includegraphics{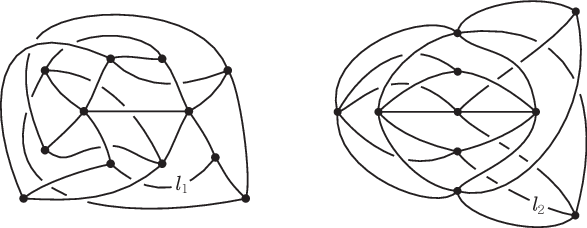}
\caption{Two cousins 94 and 110 of the $E_9+e$ family}
\label{fig3}
\end{figure}

We introduce a previously unknown graph $M_{11}$, shown in Figure \ref{fig4},
that is a triangle-free intrinsically knotted (but not minor minimal) graph
with 22 edges having four vertices with degree 5.
Note that $M_{11}$ does not appear in the $K_{3,3,1,1}$ family or the $E_9+e$ family.
Indeed if we contract the edge $l_3$,
we get the minor minimal intrinsically knotted graph $H_{10}$ of the $K_7$ family.

\begin{figure}[h]
\includegraphics{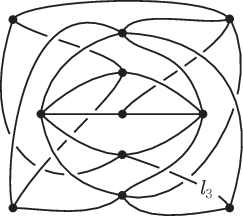}
\caption{New intrinsically knotted graph $M_{11}$}
\label{fig4}
\end{figure}

The goal of this research is to find the complete set of intrinsically knotted graphs with 22 edges.
To this end, using the main argument in~\cite{LKLO},
we seek triangle-free intrinsically knotted graphs with 22 edges.
In this paper, we find all such graphs among those having
at least two vertices with degree 5 or a vertex with degree larger than 5.

\begin{theorem}\label{thm:main}
There are exactly three triangle-free intrinsically knotted graphs with 22 edges among graphs
having at least two vertices with degree 5, the two cousins 94 and 110 of the $E_9+e$ family,
and a previously unknown graph named $M_{11}$ (Figure~\ref{fig3}~and~\ref{fig4}).

Furthermore, there is no triangle-free intrinsically knotted graph with 22 edges
that has a vertex with degree larger than 5.
\end{theorem}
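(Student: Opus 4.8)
The plan is to prove the contrapositive of the final sentence: no triangle-free intrinsically knotted graph $G$ with $22$ edges can carry a vertex of degree $6$ or more. First I would dispose of low-degree vertices so as to reduce to minimum degree $3$. If $G$ has a vertex of degree $1$, deleting it leaves a triangle-free intrinsically knotted graph $G'$ with $21$ edges that still contains the alleged degree-$\geq 6$ vertex; by the classification quoted in the introduction $G'$ is one of the $14$ graphs of the $K_7$ family, and being triangle-free forces $G' \in \{H_{12}, C_{14}\}$, both of which have maximum degree at most $5$ -- a contradiction. If $G$ has a vertex $x$ of degree $2$ with neighbors $a,c$, then triangle-freeness gives $a \not\sim c$, so smoothing $x$ (delete $x$, add the edge $ac$) creates no multi-edge and yields an intrinsically knotted $G'$ with $21$ edges that retains the degree-$\geq 6$ vertex. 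If $G'$ is triangle-free we conclude as before. If $G'$ has a triangle, then $G$ is obtained from $G'$ by subdividing the single edge $ac$, which would force every triangle of $G'$ to pass through $ac$; this is impossible for each of the $12$ triangle-containing members of the $K_7$ family. Hence I may assume $G$ has minimum degree $3$.

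Now fix a vertex $v$ of degree $d \geq 6$. Since $G$ is triangle-free, $N(v)$ is an independent set of size $d$; put $R = V(G) \setminus (\{v\} \cup N(v))$. Each $u \in N(v)$ has degree at least $3$, exactly one edge to $v$, and no edge inside $N(v)$, so it sends at least two edges into $R$; since each $N(v)$--$R$ edge has a unique endpoint in $N(v)$, we get $|E(N(v),R)| = \sum_{u \in N(v)}(\deg(u)-1) \geq 2d$. On the other hand $|E(N(v),R)| \leq 22 - d$, whence $2d \leq 22 - d$ and $d \leq 7$. Combining this with $\sum_{w} \deg(w) = 44$ and Mantel's bound $22 \leq n^2/4$, which under minimum degree $3$ forces $10 \leq n \leq 14$, pins $R$ down to a tiny set: for $d = 7$ one gets $|E(R)| \leq 1$ with $14$ or $15$ edges forced between $N(v)$ and $R$, and for $d = 6$ one gets $|E(R)| \leq 4$. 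In either case the bipartite block between $N(v)$ and the handful of vertices of $R$ is nearly complete, so only finitely many, and in fact few, graphs $G$ survive.

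To eliminate the survivors I would invoke the known fact (the $2$-apex connection already used by Barsotti and Mattman) that an intrinsically knotted graph is never $2$-apex: for any two vertices $x,y$ the graph $G - x - y$ must be non-planar. Read backwards this is a falsification test -- if deleting two vertices from a candidate $G$ reaches a planar graph, then $G$ is not intrinsically knotted. The rigid structure isolated above, a high-degree $v$ with independent neighborhood attached to a very small $R$, is exactly the shape one expects to be $2$-apex: deleting $v$ leaves a sparse triangle-free graph concentrated on the few vertices of $R$, and I expect that removing $v$ together with one suitably chosen vertex of $R$ planarizes every surviving candidate. Carrying this out amounts to enumerating the admissible bipartite attachment patterns between $N(v)$ and $R$ (subject to triangle-freeness and the degree constraints) and exhibiting, for each, an explicit pair of vertices whose deletion yields a planar graph.

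The main obstacle is precisely this final enumeration. The counting step cleanly localizes everything to $d \in \{6,7\}$ with $R$ small, but once one also allows the few edges inside $R$ and occasional edges among second neighbors, the number of non-isomorphic patterns is large enough that the $2$-apex certificates must be organized rather than guessed. I would therefore group the candidates by the degree sequence on $R$ and by the bipartite degree profile on $N(v)$, and within each group give a uniform choice of the two vertices to delete, verifying planarity of the result. The same machinery -- degree counting to localize the structure, $2$-apex certificates to kill the non-knotted candidates, and explicit knotted-cycle arguments to confirm the survivors -- drives the first half of the theorem, the identification of the cousins $94$ and $110$ and the new graph $M_{11}$; the degree-$\geq 6$ statement is the cleanest corner of that analysis because the bound $d \leq 7$ leaves almost nothing to examine.
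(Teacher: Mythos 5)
Your proposal only engages with the second sentence of the theorem (no vertex of degree $6$ or more), and even there it stops at a plan rather than a proof. The high-level strategy you choose is the right one and is the same as the paper's: an intrinsically knotted graph cannot become planar after deleting two vertices, so one hunts for a pair $\{a,b\}$ certifying non-knottedness. Your degree count ($N(v)$ independent by triangle-freeness, each neighbour sending at least two edges outward, hence $3d\le 22$ and $d\le 7$) matches the paper's opening observation that $\deg(a)<8$. But you explicitly defer ``the final enumeration'' of attachment patterns and the planarity certificates, calling it the main obstacle; that enumeration \emph{is} the proof, and without it nothing is established. The reason it feels like a large obstacle in your setup is that you plan to check planarity of $G-x-y$ directly, a graph that still has $15$--$16$ edges. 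The paper's key device, which you are missing, is the reduced graph $\widehat{G}_{a,b}$: after deleting $a,b$ one also contracts away all resulting vertices of degree $1$ or $2$, and the count equation expresses $|\widehat{E}_{a,b}|$ in terms of $NE(a,b)$, $NV_3(a,b)$, $|V_4(a,b)|$ and $|V_Y(a,b)|$. Since a graph with minimum degree $3$ and at most $8$ edges is automatically planar (and with $9$ edges is planar unless it is $K_{3,3}$), planarity verification collapses to an edge count, and for $\deg(a)=7$ a single line ($NE(a,b)\ge 9$, $|V_3(a)\setminus\{b\}|\ge 5$) finishes that case. With your formulation the same cases would require genuine planar embeddings or an unorganized search.

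The more serious gap is that the first sentence of the theorem --- that cousins $94$ and $110$ of the $E_9+e$ family and the new graph $M_{11}$ are \emph{exactly} the triangle-free $22$-edge intrinsically knotted graphs with two vertices of degree $5$ --- is dismissed in one sentence as ``the same machinery.'' It is not a corner case: it occupies Sections~4 and~5 of the paper, splits into the subcases ${\rm dist}(a,b)\ge 2$ and ${\rm dist}(a,b)=1$, and requires a delicate bookkeeping of the neighbourhood types $x=V_3(a,b)$, $y=V_4(a,b)$, $z=V_5(a,b)$, $u^a$, $w^a$ together with the $|\widehat{E}_{a,b}|=9$ borderline (where one must additionally rule out $K_{3,3}$) before the three surviving graphs emerge. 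Nothing in your proposal produces these graphs, distinguishes them from the eliminated candidates, or establishes that they are in fact intrinsically knotted (the paper does this by exhibiting them as cousins in known families, or, for $M_{11}$, by contracting an edge to reach $H_{10}$ of the $K_7$ family). As written, the proposal is a correct strategic outline of roughly one fifth of the argument.
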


The remaining sections of the paper are devoted to the proof of Theorem \ref{thm:main}.
This paper relies on the technical machinery developed in \cite{LKLO}.
In Section \ref{sec:term}, we review this material.
Henceforth, $a$ denotes a vertex of maximal degree in $G$.
The proof is divided into three parts according to the degree of $a$.
In Section \ref{sec:deg6}, we show that any graph $G$ with $\deg(a) \geq 6$
cannot be triangle-free intrinsically knotted.
In Section \ref{sec:deg5dist2}, we show that the only triangle-free intrinsically knotted graphs
with two vertices $a$ and $b$ of degree 5 with distance at least 2 are
cousin 110 of the $E_9+e$ family and the graph $M_{11}$.
In Section \ref{sec:deg5dist1}, we show that the only triangle-free intrinsically knotted graph
with two vertices $a$ and $b$ of degree 5 with distance 1
is cousin 94 of the $E_9+e$ family.

\section{Terminology} \label{sec:term}

Henceforth, let $G=(V,E)$ denote a triangle-free graph with 22 edges.
Here $V$ and $E$ denote the sets of vertices and edges of $G$ respectively.
For any two distinct vertices $a$ and $b$,
let $\widehat{G}_{a,b}=(\widehat{V}_{a,b}, \widehat{E}_{a,b})$ denote the graph
obtained from $G$ by deleting two vertices $a$ and $b$,
and then contracting edges adjacent to vertices of degree 1 or 2,
one by one repeatedly, until no vertices of degree 1 or 2 remain.
Removing vertices means deleting interiors of all edges adjacent to these vertices
and remaining isolated vertices.
The distance, denoted by dist$(a,b)$, between $a$ and $b$ is the number of edges
in the shortest path connecting them.
The degree of $a$ is denoted by $\deg(a)$.
To count the number of edges of $\widehat{G}_{a,b}$, we have some notation.

\begin{itemize}
\item $E(a)$ is the set of edges that are adjacent to $a$.
\item $E(V') = \cup_{a \in V'} E(a)$ for a subset $V'$ of $V$.
\item $V(a)=\{c \in V\ |\ \mbox{dist}(a,c)=1\}$
\item $V_n(a)=\{c \in V\ |\ \mbox{dist}(a,c)=1,\ \deg(c)=n,\}$
\item $V_n(a,b)=V_n(a) \cap V_n(b)$
\item $V_Y(a,b)=\{c \in V\ |\ \exists \ d \in V_3(a,b) \ \mbox{such that}
\ c \in V_3(d) \setminus \{a,b\}\}$
\end{itemize}

Obviously in $G \setminus \{a,b\}$ for some distinct vertices $a$ and $b$,
each vertex of $V_3(a,b)$ has degree 1.
Also each vertex of $V_3(a), V_3(b)$ (but not of $V_3(a,b)$) and $V_4(a,b)$ has degree 2.
Therefore to derive $\widehat{G}_{a,b}$ all edges adjacent to $a,b$ and $V_3(a,b)$
are deleted from $G$,
followed by contracting one of the remaining two edges adjacent to each vertex of
$V_3(a)$, $V_3(b)$, $V_4(a,b)$ and $V_Y(a,b)$ as in Figure \ref{fig5}(a).
Thus we have the following equation counting the number of edges of $\widehat{G}_{a,b}$
which is called the {\em count equation\/};
$$|\widehat{E}_{a,b}| = 22 - |E(a)\cup E(b)| -
(|V_3(a)|+|V_3(b)|-|V_3(a,b)|+|V_4(a,b)|+|V_Y(a,b)|)$$

For short, write $NE(a,b) = |E(a)\cup E(b)|$ and $NV_3(a,b) = |V_3(a)|+|V_3(b)|-|V_3(a,b)|$.
Provided that $a$ and $b$ are adjacent vertices (i.e. dist$(a,b)=1$),
$V_3(a,b), V_4(a,b)$ and $V_Y(a,b)$ are all empty sets because $G$ is triangle-free.
Note that the derivation of $\widehat{G}_{a,b}$ must be handled slightly differently
when there is a vertex $c$ in $V$ such that more than one vertex of $V(c)$ is contained
in $V_3(a,b)$ as in Figure \ref{fig5}(b).
In this case we usually delete or contract more edges even though $c$ is not in $V_Y(a,b)$.

\begin{figure}[h]
\includegraphics{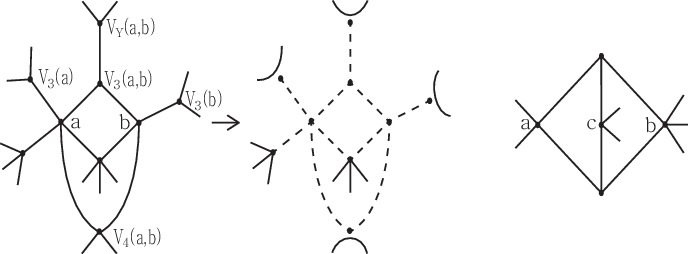}\\
\vspace{5mm}
\hspace{22mm}(a)\hspace{58mm} (b)
\caption{Deriving $\widehat{G}_{a,b}$}
\label{fig5}
\end{figure}

The following proposition, which was mentioned in \cite{LKLO},
gives an important condition for a graph not to be intrinsically knotted.
Note that $K_{3,3}$ is a triangle-free graph and every vertex has degree 3.

\begin{proposition} \label{prop:planar}
If $\widehat{G}_{a,b}$ satisfies one of the following two conditions,
then $G$ is not intrinsically knotted.
\begin{itemize}
\item[(1)] $|\widehat{E}_{a,b}| \leq 8$, or
\item[(2)] $|\widehat{E}_{a,b}|=9$ and $\widehat{G}_{a,b}$ is not homeomorphic to $K_{3,3}$.
\end{itemize}
\end{proposition}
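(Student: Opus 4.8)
The plan is to reduce the statement to a planarity question and then invoke the known fact, recorded in \cite{LKLO}, that a $2$-apex graph (one that becomes planar after the deletion of two vertices) admits a knotless embedding and is therefore not intrinsically knotted. Concretely, I would first observe that $G \setminus \{a,b\}$ is planar if and only if $\widehat{G}_{a,b}$ is planar. The only operations used to pass from $G\setminus\{a,b\}$ to $\widehat{G}_{a,b}$ are the contraction of an edge at a degree-$1$ vertex (equivalently, deletion of a pendant edge) and the contraction of one of the two edges at a degree-$2$ vertex (suppression of that vertex); both of these operations, and their inverses, preserve planarity, so the two graphs are simultaneously planar or non-planar. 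Hence it suffices to prove that, under either hypothesis, $\widehat{G}_{a,b}$ is planar, for then $G$ is $2$-apex with apex pair $\{a,b\}$ and so not intrinsically knotted.

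To establish planarity I would appeal to Kuratowski's theorem: a graph fails to be planar exactly when it contains a subdivision of $K_5$ or of $K_{3,3}$. The numerical inputs I need are that any subdivision of $K_5$ has at least $10$ edges, while any subdivision of $K_{3,3}$ has at least $9$ edges, with equality in the latter case only for $K_{3,3}$ itself (any proper subdivision inserts degree-$2$ vertices and strictly raises the edge count). For condition (1), if $|\widehat{E}_{a,b}| \leq 8$ then $\widehat{G}_{a,b}$ simply has too few edges to host either kind of subdivision, so it is planar and we are done.

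For condition (2), suppose $|\widehat{E}_{a,b}| = 9$ and, for contradiction, that $\widehat{G}_{a,b}$ is non-planar. A $K_5$-subdivision is impossible, as it would require $10$ edges, so $\widehat{G}_{a,b}$ must contain a subdivision of $K_{3,3}$. Since such a subdivision already uses at least $9$ edges and the whole graph has exactly $9$, this subdivision is $K_{3,3}$ itself and consumes every edge of $\widehat{G}_{a,b}$. Now I invoke the defining property that $\widehat{G}_{a,b}$ contains no vertex of degree $1$ or $2$: a vertex lying outside the $K_{3,3}$ would be incident to none of the remaining ($0$) edges and hence isolated, and any additional component would itself have minimum degree $3$ and thus need more edges than remain. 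Therefore $\widehat{G}_{a,b}$ equals, and in particular is homeomorphic to, $K_{3,3}$, contradicting the hypothesis of (2). So $\widehat{G}_{a,b}$ is planar, and again $G$ is not intrinsically knotted.

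The step demanding the most care is this last deduction in the case $|\widehat{E}_{a,b}|=9$: upgrading ``contains a $K_{3,3}$-subdivision on all nine edges'' to ``is homeomorphic to $K_{3,3}$'' must rule out stray vertices and extra components, and it is precisely here that the minimum-degree-$3$ normalization built into the definition of $\widehat{G}_{a,b}$ is indispensable. By contrast, the homeomorphism invariance of planarity and the $2$-apex obstruction to intrinsic knotting are standard and may simply be quoted.
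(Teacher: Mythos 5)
Your proof is correct and follows essentially the same route as the paper: the paper likewise asserts that at most $8$ edges forces planarity, that exactly $9$ edges forces planarity or homeomorphism to $K_{3,3}$, and then quotes the $2$-apex obstruction from \cite{BBFFHL,OT}. You have merely filled in the Kuratowski edge-counting and the planarity-preservation of the reductions defining $\widehat{G}_{a,b}$, which the paper leaves implicit.
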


\begin{proof}
If $|\widehat{E}_{a,b}| \leq 8$, then $\widehat{G}_{a,b}$ is a planar graph.
Also if $|\widehat{E}_{a,b}|=9$, then $\widehat{G}_{a,b}$ is
either a planar graph or homeomorphic to $K_{3,3}$.
It is known that if $\widehat{G}_{a,b}$ is planar,
then $G$ is not intrinsically knotted \cite{{BBFFHL},{OT}}.
\end{proof}

Since the intrinsically knotted graphs with 21 edges are known,
it is sufficient to consider simple and connected graphs having no vertex of degree 1 or 2.
Our process is to construct all possible such triangle-free graphs $G$ with 22 edges,
delete two suitable vertices $a$ and $b$ of $G$,
and then count the number of edges of $\widehat{G}_{a,b}$.
If $\widehat{G}_{a,b}$ has 9 edges or less and is not homeomorphic to $K_{3,3}$,
then we can use Proposition \ref{prop:planar} to show that $G$ is not intrinsically knotted.

We introduce more notation.
Let $H$ be a connected subgraph of $G$.
Let $\overline{E}(H)$ denote the set of all edges of $G \setminus H$,
and $\overline{V}(H)$ denote the set of all vertices of $G$
that are not a vertex of $H$ with degree more than 1.

\begin{itemize}
\item $\overline{E}(H) = \{\overline{e}_1, \cdots, \overline{e}_m\}$ \ \ for some $m$
\item $\overline{V}(H) = \{\overline{v}_1, \cdots, \overline{v}_n\}$ \ \ for some $n$ \
     with $\deg(\overline{v}_i) \geq \deg(\overline{v}_{i+1})$
\item $[\overline{V}(H)] = [\deg(\overline{v}_1), \cdots, \deg(\overline{v}_n)]$
\item $|[\overline{V}(H)]| = \deg(\overline{v}_1)+ \cdots + \deg(\overline{v}_n)$
\end{itemize}
We will use $\overline{e}_i$ and $\overline{v}_j$ without referring to $H$,
and sometimes we call $\overline{e}_i$ an {\em extra edge\/}.
To visualize $G$, we basically proceed through the following steps.
First choose a proper subgraph $H$ of $G$.
Draw $\overline{E}(H)$ apart from $H$ as in Figure \ref{fig6}(a).
$|[\overline{V}(H)]|$ indicates the number of vertices of degree 1 of $H$ and $\overline{E}(H)$,
and they are merged into vertices of $\overline{V}(H)$ as in Figure \ref{fig6}(b).
The graph in Figure \ref{fig6}(b) is an example representing
$|\overline{E}(H)| = 3$, $|[\overline{V}(H)]| = 20$ and $[\overline{V}(H)]=[4,4,3,3,3,3]$.

\begin{figure}[h]
\includegraphics{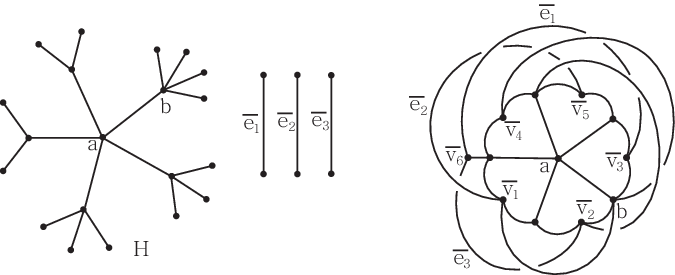}\\
\vspace{5mm}
\hspace{10mm} (a)\hspace{60mm} (b)
\caption{Visualization of $G$}
\label{fig6}
\end{figure}

\section{Case of $\deg(a) \geq 6$} \label{sec:deg6}

Recall that $G$ is a triangle-free graph with 22 edges, every vertex has degree at least 3
and $a$ has the maximal degree among all vertices.
In this section we will show that for some $b \in V$, $|\widehat{E}_{a,b}| \leq 8$.
Then as a conclusion $G$ is not intrinsically knotted by Proposition \ref{prop:planar}.
Obviously $\deg(a) < 8$ because, otherwise, $G$ would have at least 24 edges.

If $\deg(a) = 7$, then $|V_3(a)| \geq 6$ by the same reason as above.
Let $b$ be any vertex in $V(a)$.
Then $NE(a,b) \geq 9$ and $|V_3(a) \setminus \{b\}| \geq 5$.
Note that $|V_3(b)| \geq |V_3(a,b)|$.
By the count equation, $|\widehat{E}_{a,b}| \leq 8$ in $\widehat{G}_{a,b}$.

Now assume that $\deg(a) = 6$.
Since $G$ has 22 edges, $|V_3(a)| \geq 2$.
Assume that $|V_3(a)| \geq 4$.
Let $c$ be any vertex in $V_3(a)$.
Choose a vertex $b$ that has the maximal degree from $V(c) \setminus \{a\}$.
Then $|E(a)| = 6$ and $|E(b)| + |V_Y(a,b)| \geq 4$ since $|V_Y(a,b)| \geq 1$ when $\deg(b)=3$.
Thus $|\widehat{E}_{a,b}| \leq 8$.

If $|V_3(a)| = 2$ or $3$, then there must be a vertex, call it $b$, of degree 4 in $V(a)$.
If all the vertices of $V(b)$ other than $a$ have degree 3,
then $NE(a,b) = 9$ and $NV_3(a,b) \geq 5$, yielding $|\widehat{E}_{a,b}| \leq 8$.
Otherwise, there is a vertex $c$ of degree at least 4 in $V(b) \setminus \{a\}$.
Consider $\widehat{G}_{a,c}$ where $NE(a,c) = 10$.
When $|V_3(a)| = 3$, $|\widehat{E}_{a,c}| \leq 8$ because $|V_4(a,c)| \geq 1$.
When $|V_3(a)| = 2$, the graph consisting of the edges of $E(V(a))$ is $G$ itself
as in Figure \ref{fig7}.
At least one edge in $E(c)$ must be adjacent to a vertex of $V_4(a)$ other than $b$.
This implies that $|V_4(a,c)| \geq 2$ so that $|\widehat{E}_{a,c}| \leq 8$.

\begin{figure}[h]
\includegraphics{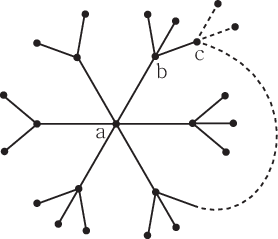}
\caption{Case of $\deg(a) = 6$}
\label{fig7}
\end{figure}

\section{Case of $\deg(a) = \deg(b) = 5$ and ${\rm dist}(a,b) \geq 2$} \label{sec:deg5dist2}

In this section we will consider the case that there are two vertices $a$ and $b$
of degree 5 with distance at least 2.
Indeed, if dist$(a,b) > 2$, $|E(V(a))| \leq 17$ because $E(V(a))$ is disjoint from $E(b)$
(similarly for $|E(V(b))|$).
This implies that $|V_3(a)| \geq 3$ and $|V_3(b)| \geq 3$.
Since they are disjoint, $|\widehat{E}_{a,b}| \leq 6$.

From now on we assume that dist$(a,b) = 2$.
Let $H$ be a subgraph of $G$ which consists of the edges of $E(V(a) \cup \{b\})$
and the  associated vertices.
Before dividing into several cases, we give some notation:
$x = V_3(a,b)$,
$y = V_4(a,b)$,
$z = V_5(a,b)$,
$u^a = V_3(a) \setminus V_3(a,b)$ and
$w^a = V(a) \setminus \{V_3(a) \cup V(b)\}$
(similarly for $b$) as indicated in Figure \ref{fig8}.
Furthermore we will write $x=\{x_1, \cdots, x_{|x|}\}$,
$u^a=\{u^a_1, \cdots, u^a_{|u^a|}\}$ and similarly for the others.
By abuse of notation, we will use the same notation
for a set and its size. For example, $x$ will also mean $|x|$.
Obviously we may assume that $u^a \leq u^b$.

\begin{figure}[h]
\includegraphics{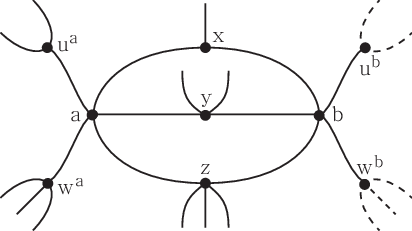}
\caption{Notations for subgraph $H$}
\label{fig8}
\end{figure}

By counting the number of edges of $H$, $10 + x + 2(y + u^a) + 3(z + w^a) \leq 22$.
Since $|V(a)| = 5$, $x + y + z + u^a + w^a = 5$.
Thus we have the following inequality;
$$y + u^a + 2(z + w^a) \leq 7.$$
If $x + y + u^a \leq 1$, then $z + w^a \geq 4$ which does not agree with the inequality above.
If $x + y + u^a \geq 4$, then $|\widehat{E}_{a,b}| \leq 8$.
Thus we consider only $x + y + u^a = 2$ or $3$.

Now we divide into six cases as follows.
If $x + y + u^a = 2$, we divide into the two cases where $x=1$ and $2$.
We can ignore the case of $x=0$ because of the inequality.
If $x + y + u^a = 3$, we have four cases: $x=0,1,2$ or $3$.
Indeed, in the case of $x + y + u^a = 3$, $|\widehat{E}_{a,b}| \leq 9$.
Furthermore we may assume that $u^b = 0$ (so $u^a = 0$) and $|V_Y(a,b)| = 0$
(i.e. $V_3(x) = \emptyset$),
otherwise $|\widehat{E}_{a,b}| \leq 8$.

\subsection{Case of $x + y + u^a = 2$ with $x = 1$}   \hspace{1cm}

Cousin 110 of the $E_9+e$ family appears in this case.
Note that $|H|=22$ and there is no extra edge.
Suppose that $z=0$ so that $w^a=3$.
If $u^a=0$, then $y=1$.
Recall that $x_1$ and $y_1$ are the unique elements of the sets $x$ and $y$ respectively.
Since $|[\overline{V}(H)]|=15$, the associated 15 edges are adjacent to vertices of $\overline{V}(H)$.
Among these 15 edges, two edges of $E(y_1)$ and three edges of $E(b)$ must be adjacent to
different vertices because $G$ is triangle-free.
So $\overline{V}(H)$ has at least 5 vertices, and we get $[\overline{V}(H)] = [3,3,3,3,3]$.
Therefore $u^b=3$ so that $|\widehat{E}_{a,b}| \leq 7$.
If $u^a=1$, then $|[\overline{V}(H)]|=16$.
Similarly among the associated 16 edges, one edge of $E(x_1)$ and four edges of $E(b)$ are adjacent to
different vertices of $\overline{V}(H)$.
So $\overline{V}(H)$ has at least 5 vertices, and we get $[\overline{V}(H)] = [4,3,3,3,3]$.
Therefore $u^b=3$ or $4$, yielding $|\widehat{E}_{a,b}| \leq 7$.

Suppose that $z=1$ so that $w^a=2$.
If $u^a=0$, then $|[\overline{V}(H)]|=14$.
Among the associated 14 edges, three edges of $E(z_1)$ and two edges of $E(b)$ are
adjacent to different vertices.
But this is impossible because each vertex of $\overline{V}(H)$ has degree at least 3.
If $u^a=1$, then $|[\overline{V}(H)]|=15$.
Among the associated 15 edges, three edges of $E(z_1)$ and three edges of $E(b)$ are adjacent to
different vertices, but this is also impossible.

Suppose that $z=2$ so that $w^a=1$.
If $u^a=0$, then $u^b+w^b=1$. Let $d$ be the unique vertex of $u^b$ or $w^b$.
Since no edge of $E(x_1)$, $E(y_1)$ or $E(z_i)$ can be adjacent to $d$,
two edges of $E(w^a)$ must simultaneously be adjacent to $d$ to make $\deg(d) \geq 3$.
If $u^a=1$, then $|[\overline{V}(H)]|=14$.
Among the  associated 14 edges, three edges of $E(z_1)$ and two edges of $E(b)$ are
adjacent to different vertices.
But this is impossible because each vertex of $\overline{V}(H)$ has degree at least 3.

Finally suppose that $z=3$ so that $w^a=0$.
Recall that $z=\{z_1, z_2, z_3\}$.
If $u^a=0$, then $u^b=w^b=0$ and $|[\overline{V}(H)]|=12$.
Thus $[\overline{V}(H)] = [5,4,3]$, $[4,4,4]$ or $[3,3,3,3]$.
When $[\overline{V}(H)] = [5,4,3]$, $G$ is homeomorphic to cousin 110 of the $E_9+e$ family
which is intrinsically knotted but not minor minimal and shown in Figure \ref{fig3}.
When $[\overline{V}(H)] = [4,4,4]$, three edges of each $E(z_i)$ are adjacent to
three different vertices of $\overline{V}(H)$.
Therefore $|V_4(z_1,z_2)|=3$ so that $|\widehat{E}_{z_1,z_2}| \leq 9$.
Since $\widehat{G}_{z_1,z_2}$ has vertex $z_3$ of degree 5,
it is not homeomorphic to $K_{3,3}$.
When $[\overline{V}(H)] = [3,3,3,3]$, three edges of $E(z_1)$ are adjacent to
three different vertices of $\overline{V}(H)$ all of which have degree 3.
Therefore $|V_3(z_1)|=3$ so that $|\widehat{E}_{a,z_1}| \leq 9$.
Since $\widehat{G}_{a,z_1}$ has vertex $b$ of degree 4,
it is not homeomorphic to $K_{3,3}$.
If $u^a=1$, then $u^b+w^b=1$.  Let $d$ be the unique vertex of $u^b$ or $w^b$.
Since no edge of $E(x_1)$ or $E(z_i)$ can be adjacent to $d$,
two edges of $E(u^a)$ are simultaneously adjacent to $d$ to make $\deg(d) \geq 3$.

\subsection{Case of $x + y + u^a = 2$ with $x = 2$}   \hspace{1cm}

In this case, $y=u^a=0$.
Suppose that $z=0$ so that $w^a=3$ and $u^b+w^b=3$.
If there is a vertex $w^a_1$ of degree 5 in the set $w^a$,
then $|H|=22$ and $|[\overline{V}(H)]|=15$.
Among these 15 edges, four edges of $E(w^a_1)$ are adjacent to different vertices of $\overline{V}(H)$.
Thus we get $[\overline{V}(H)] = [5,4,3,3]$, $[4,4,4,3]$ or $[3,3,3,3,3]$.
In any case $NV_3(b,w^a_1)+V_4(b,w^a_1) \geq 4$ so that $|\widehat{E}_{b,w^a_1}| \leq 8$.
If three vertices $w^a_1$, $w^a_2$ and $w^a_3$ have degree 4, then $|H|=21$.
Therefore there is an extra edge $\overline{e}_1$ and $|[\overline{V}(H)]|=16$.
Thus we get $[\overline{V}(H)] = [5,5,3,3]$, $[5,4,4,3]$, $[4,4,4,4]$ or $[4,3,3,3,3]$.
When $[\overline{V}(H)] = [4,3,3,3,3]$, $|V_3(b)| \geq 4$ so that $|\widehat{E}_{a,b}| \leq 8$.
For the other three cases,
$\overline{V}(H) = \{\overline{v}_1, \overline{v}_2, \overline{v}_3, \overline{v}_4\}$.
Assume that $\overline{v}_1$ and $\overline{v}_2$ are the endpoints of $\overline{e}_1$.
See Figure \ref{fig9}(a).
Since $G$ is triangle-free, we may assume that three edges of $E(b)$ are adjacent to
$\overline{v}_1$, $\overline{v}_3$ and $\overline{v}_4$.
Similarly two edges of each $E(w^a_i)$ are adjacent to $\overline{v}_3$ and $\overline{v}_4$,
and the other edge of each $E(w^a_i)$ is adjacent to $\overline{v}_1$ or $\overline{v}_2$.
Furthermore one edge of each $E(x_j)$ is adjacent to $\overline{v}_2$ to avoid a triangle.
Therefore $|V_4(b,w^a_1)| \geq 2$ so that $|\widehat{E}_{b,w^a_1}| \leq 9$.
Since $\widehat{G}_{b,w^a_1}$ has a bigon containing two vertices $a$ and $\overline{v}_2$,
it is not homeomorphic to $K_{3,3}$.

Suppose that $z=1$.
Then $|[\overline{V}(H)]|=14$ or 15 depending on the existence of an extra edge.
Thus $\overline{V}(H)$ has five vertices all of degree 3 or else fewer than five vertices.
In either case three edges of $E(z)$ and two edges of $E(b)$ must be adjacent to
different vertices of $\overline{V}(H)$.
Thus $|\widehat{E}_{b,z}| \leq 6$ or it is impossible to construct $G$.

Suppose that $z=2$ so that $w^a=1$.
If $w^a_1$ has degree 5, then $|[\overline{V}(H)]|=13$.
Since four edges of $E(w^a_1)$ are adjacent to four different vertices of $\overline{V}(H)$,
$[\overline{V}(H)] = [4,3,3,3]$.
Thus $NV_3(b,w^a_1) = 5$ so that $|\widehat{E}_{b,w^a_1}| \leq 7$.
If $w^a_1$ has degree 4, then there is an extra edge $\overline{e}_1$ and $|[\overline{V}(H)]|=14$.
Let $\overline{v}_1$ be a vertex of $\overline{V}(H)$ which is the remaining vertex of $V(b)$.
See Figure \ref{fig9}(b).
Since $G$ is triangle-free, only $\overline{e}_1$ and one edge of $E(w^a_1)$ can be
adjacent to $\overline{v}_1$.
Let $\overline{v}_2$ be the other endpoint of $\overline{e}_1$.
Thus we get $[\overline{V}(H)] = [5,3,3,3]$ or $[4,4,3,3]$.
This implies that two edges of $E(w^a_1)$ are adjacent to $\overline{v}_3$ and $\overline{v}_4$,
and three edges of each $E(z_i)$ are adjacent to $\overline{v}_2$,
$\overline{v}_3$ and $\overline{v}_4$.
Since at least two vertices of $\overline{V}(H)$ have degree 3,
$NV_3(b,z_1) \geq 4$ so that $|\widehat{E}_{b,z_1}| \leq 9$.
Since $\widehat{G}_{b,z_1}$ has vertex $a$ with degree 4,
it is not homeomorphic to $K_{3,3}$.

\begin{figure}[h]
\includegraphics{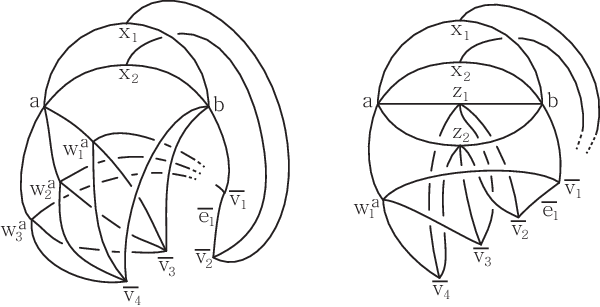}\\
\vspace{5mm}
\hspace{6mm} (a)\hspace{50mm} (b)
\caption{Case of $x + y + u^a = 2$ with $x = 2$}
\label{fig9}
\end{figure}

Suppose that $z=3$.
Then $|H|=21$ so that there is an extra edge $\overline{e}_1$ and $|[\overline{V}(H)]|=13$.
Since two edges of $E(z_1)$ cannot simultaneously be adjacent to the endpoints of $\overline{e}_1$,
$\overline{V}(H)$ needs four vertices and so $[\overline{V}(H)] = [4,3,3,3]$.
Since $NV_3(a,z_1) \geq 4$, $|\widehat{E}_{a,z_1}| \leq 9$.
Since $\widehat{G}_{a,z_1}$ has vertex $b$ with degree 4,
it is not homeomorphic to $K_{3,3}$.

\subsection{Case of $x + y + u^a = 3$ with $x = 0$}   \hspace{1cm}

Recall that $x=0$, $y=3$, $u^a=u^b=0$ (as mentioned before Case 4.1.) and $|\widehat{E}_{a,b}| \leq 9$.
Suppose that $z=0$ or 1.
Note that $w^a = w^b = 2 - z$.
Since $|H|=22$ and $\deg(w^b_1) \geq 4$, at least three edges of $E(w^b_1)$ must be adjacent to
different vertices of $w^a$, a contradiction to $w^a \leq 2$.

Suppose that $z=2$.
Since $|[\overline{V}(H)]|=12$, we get $[\overline{V}(H)] = [5,4,3]$, $[4,4,4]$ or $[3,3,3,3]$.
When $[\overline{V}(H)] = [5,4,3]$ or $[4,4,4]$,
$|\widehat{E}_{a,b}| \leq 9$ and $\widehat{G}_{a,b}$ has vertices with degree 4 or 5.
Consider $[\overline{V}(H)] = [3,3,3,3]$.
If $V(z_1) \neq V(z_2)$, then $NV_3(z_1,z_2)=4$ so that $|\widehat{E}_{z_1,z_2}| \leq 8$.
Assume that $V(z_1) = V(z_2) = \{a,b,\overline{v}_1,\overline{v}_2,\overline{v}_3\}$.
So $V(\overline{v}_4) = \{y_1,y_2,y_3\}$ and we have the graph of Figure \ref{fig10}(a).
Then $|\widehat{E}_{z_1,y_1}| \leq 9$
and $\widehat{G}_{z_1,y_1}$ has vertex $z_2$ with degree 4.

\begin{figure}[h]
\includegraphics{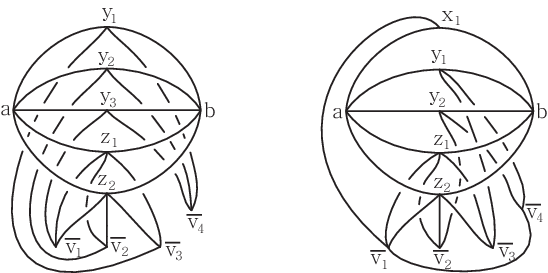}\\
\vspace{5mm}
(a)\hspace{52mm} (b)
\caption{Cases of $x + y + u^a = 3$ with $x = 0$ or $x = 1$}
\label{fig10}
\end{figure}

\subsection{Case of $x + y + u^a = 3$ with $x = 1$}   \hspace{1cm}

The new graph $M_{11}$ appears in this case.
Note that $x=1$, $y=2$, $u^a=u^b=0$, $|\widehat{E}_{a,b}| \leq 9$
and possibly there is an extra edge $\overline{e}_1$.
Suppose that $z=0$.
Note that $w^a=w^b=2$.
Then two edges of $E(w^b_1)$ must be adjacent to different vertices of $w^a$
and another edge of $E(w^b_1)$ is $\overline{e}_1$.
The same thing happens for $E(w^b_2)$ so that $G$ has the triangle
consisting of $\overline{e}_1$ and two edges of $E(b)$.
Suppose that $z=1$.
Then, even though one edge of $E(w^b_1)$ is $\overline{e}_1$,
two edges of $E(w^b_1)$ must be adjacent to $w^a_1$, a contradiction.

Suppose that $z=2$.
Since $|[\overline{V}(H)]|=13$, we get $[\overline{V}(H)] = [5,5,3]$, $[5,4,4]$ or $[4,3,3,3]$.
When $[\overline{V}(H)] = [5,5,3]$ or $[5,4,4]$,
$|\widehat{E}_{a,b}| \leq 9$ and $\widehat{G}_{a,b}$ has vertices with degree 4 or 5.
Suppose $[\overline{V}(H)] = [4,3,3,3]$.
If three vertices of $V(z_i)$ other than $a$ and $b$ have all degree 3,
then $|\widehat{E}_{a,z_1}| \leq 9$
and $\widehat{G}_{a,z_1}$ has vertex $b$ with degree 4, a contradiction.
Thus one edge of each $E(z_i)$ is adjacent to $\overline{v}_1$ of degree 4.
If $V(z_1) \neq V(z_2)$, then $NV_3(z_1,z_2) = 3$ and $V_4(z_1,z_2)$ contains $\overline{v}_1$,
yielding $|\widehat{E}_{z_1,z_2}| \leq 8$.
Assume that $V(z_1) = V(z_2) = \{a,b,\overline{v}_1,\overline{v}_2,\overline{v}_3\}$.
Since $V_Y(a,b)$ is empty, one edge of $E(x_1)$ is adjacent to $\overline{v}_1$.
Now the third edge of each $E(\overline{v}_i)$, $i=2,3$, is adjacent to a vertex of set $y$,
otherwise $|\widehat{E}_{z_1,z_2}| \leq 9$
and $\widehat{G}_{z_1,z_2}$ has a vertex $y_1$ or $y_2$ with degree 4.
Thus we have the graph of Figure \ref{fig10}(b).
This graph $G$ is indeed $M_{11}$.

\subsection{Case of $x + y + u^a = 3$ with $x = 2$}   \hspace{1cm}

Again the graph $M_{11}$ appears in this case.
Note that $x=2$, $y=1$, $u^a=u^b=0$, $|\widehat{E}_{a,b}| \leq 9$
and possibly there are two extra edges $\overline{e}_1$ and $\overline{e}_2$.
Suppose that $z=0$.
Obviously extra edges and edges of only $E(w^a_1)$ and $E(w^a_2)$
can be adjacent to $w^b_1$ and $w^b_2$.
Since an extra edge cannot be adjacent to both $w^b_1$ and $w^b_2$ simultaneously,
there are two edges of $E(w^a_1)$ adjacent to $w^b_1$ and $w^b_2$ respectively.
Therefore $|V_4(b,w^a_1)|=2$ and $|V_3(b)|=2$, yielding $|\widehat{E}_{b,w^a_1}| \leq 9$.
Since $\widehat{G}_{b,w^a_1}$ has vertex $a$ with degree 4,
it is not homeomorphic to $K_{3,3}$.

Suppose that $z=1$.
Then two extra edges and one edge of $E(w^a_1)$ must be adjacent to $w^b_1$.
See Figure \ref{fig11}(a).
Since $w^b_1$ has degree 4 and $|[\overline{V}(H)]|=15$,
we get $[\overline{V}(H)] = [5,4,3,3]$ or $[4,4,4,3]$.
This implies that $\overline{V}(H)$ has four vertices including $w^b_1$.
Then the remaining two edges of $E(w^a_1)$ and two extra edges must be adjacent to
three vertices in $\overline{V}(H)$ other than $w^b_1$.
Thus $G$ has a triangle consisting of two edges of $E(w^a_1)$ and an extra edge.

\begin{figure}[h]
\includegraphics{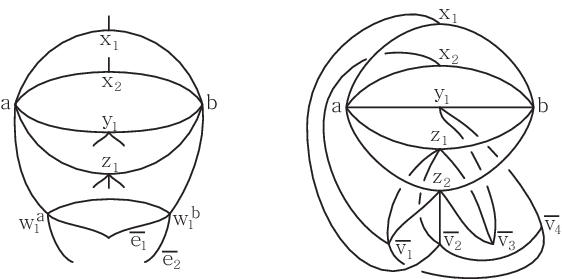}\\
\vspace{5mm}
(a)\hspace{50mm} (b)
\caption{Case of $x + y + u^a = 3$ with $x = 2$}
\label{fig11}
\end{figure}

Suppose that $z=2$.
Since $|[\overline{V}(H)]|=14$, we get $[\overline{V}(H)] = [5,5,4]$, $[5,3,3,3]$ or $[4,4,3,3]$.
When $[\overline{V}(H)] = [5,5,4]$,
$|\widehat{E}_{a,b}| \leq 9$ and $\widehat{G}_{a,b}$ has vertices
with degree 4 or 5 in $\overline{V}(H)$.
When $[\overline{V}(H)] = [5,3,3,3]$, $|V_3(z_1)| \geq 2$.
Therefore $|\widehat{E}_{b,z_1}| \leq 9$ and $\widehat{G}_{b,z_1}$ has vertex $a$ with degree 4.
When $[\overline{V}(H)] = [4,4,3,3]$,
we assume that $|V_3(z_1)|=|V_3(z_2)|=1$ by the same reason as before.
Recall that $\overline{v}_1$ and $\overline{v}_2$ have degree 4,
and $\overline{v}_3$ and $\overline{v}_4$ have degree 3.
See Figure \ref{fig11}(b).
Two edges of each $E(z_i)$ must be adjacent to $\overline{v}_1$ and $\overline{v}_2$ respectively.
The remaining two edges of $E(z_1)$ and $E(z_2)$ are adjacent to one vertex, say $\overline{v}_3$,
otherwise $NV_3(z_1,z_2)=2$ and $|V_4(z_1,z_2)|=2$, yielding $|\widehat{E}_{z_1,z_2}| \leq 8$.
Furthermore $V_Y(z_1,z_2)$ must be empty,
so the remaining edge of $E(\overline{v}_3)$ is adjacent to $y_1$.
Also $V_Y(a,b)$ must be empty,
so one edge of each $E(x_j)$ is adjacent to $\overline{v}_1$ or $\overline{v}_2$.
Combining all these conditions, we get the graph $M_{11}$
which is the same graph as in Figure \ref{fig10}(b)
with $\{a,b\}$ and $\{z_1,z_2\}$ exchanged.

\subsection{Case of $x + y+ u^a = 3$ with $x = 3$}   \hspace{1cm}

In this case $x = 3$, $y=u^a=u^b=0$, $|\widehat{E}_{a,b}| \leq 9$
and possibly there are three extra edges $\overline{e}_1$, $\overline{e}_2$ and $\overline{e}_3$.
First suppose that all three edges each of which is from $E(x_i)$, $i=1,2,3$,
are adjacent to one vertex $c$ which is neither $a$ nor  $b$.
This means that $V(c)$ contains $V_3(a,b)$.
If $\deg(c) \geq 4$, then $|\widehat{E}_{a,b}| \leq 8$.
If $\deg(c) = 3$, then $G$ is the union of two components which are $K_{3,3}$ and $G \setminus K_{3,3}$.
Since neither component is intrinsically knotted and they meet at only two vertices $a$ and $b$,
their union $G$ cannot be intrinsically knotted.

Now assume that the three edges mentioned above are not all adjacent to one vertex.
Recall that $|V_Y(a,b)|=0$ as mentioned before Case 4.1.
Suppose that $z=0$.
If one of $w^a$, say $w^a_1$, has degree 5,
then  $|\widehat{E}_{b,w^a_1}| \leq 9$ and
$\widehat{G}_{b,w^a_1}$ has vertex $a$ with degree 4.
Thus $w^a_1$ and $w^a_2$ have degree 4.
Since $|[\overline{V}(H)]|=17$,
we get $[\overline{V}(H)] = [5,5,4,3]$, $[5,4,4,4]$, $[5,3,3,3,3]$ or $[4,4,3,3,3]$.
Since $w^b = 2$ and $|V_Y(a,b)|=0$, $\overline{V}(H)$ has four vertices with degree more than 3.
Thus we consider the only case $[5,4,4,4]$ which induces the graph of Figure \ref{fig12}(a).
Note that at least one edge of $E(w^a_1)$ is adjacent to a vertex of $w^b$.
Therefore $|V_4(b,w^a_1)| \geq 1$ and $|V_3(b)|=3$, yielding $|\widehat{E}_{b,w^a_1}| \leq 9$.
Since $\widehat{G}_{b,w^a_1}$ has vertex $w^a_2$ with degree 4,
it is not homeomorphic to $K_{3,3}$.

Suppose that $z=1$.
Since $|[\overline{V}(H)]|=16$, by reasoning similar to that above,
we need only consider the case $[\overline{V}(H)] = [4,4,4,4]$.
Take a vertex $\overline{v}_1$ with degree 4 from $V(w^a_1)$.
Then $V_4(a,\overline{v}_1) = \{w^a_1\}$ and $|V_3(a)|=3$,
yielding $|\widehat{E}_{a,\overline{v}_1}| \leq 9$ as in Figure \ref{fig12}(b).
Since $\widehat{G}_{a,\overline{v}_1}$ has vertex $b$ with degree 4,
it is not homeomorphic to $K_{3,3}$.

Suppose that $z=2$.
Since $|[\overline{V}(H)]|=15$,
we get $[\overline{V}(H)] = [5,5,5]$, $[5,4,3,3]$, $[4,4,4,3]$ or $[3,3,3,3,3]$.
Since $|V_Y(a,b)| = 0$, $[\overline{V}(H)] \neq [3,3,3,3,3]$.
The existence of three extra edges implies $[\overline{V}(H)] \neq [5,5,5]$ because $G$ is triangle-free.
Thus $[\overline{V}(H)] = [5,4,3,3]$ or $[4,4,4,3]$, so that $\overline{V}(H)$ consists of four vertices.
Since three edges of each $E(z_i)$ are adjacent to different vertices of $\overline{V}(H)$
and $G$ is triangle-free, we have $V(z_1)=V(z_2)$ producing the graph of Figure \ref{fig12}(c).
Let $\overline{v}_1$ be the vertex that the three extra edges are adjacent to.
Note that at least one of $\overline{v}_i$, $i=2,3,4$, must have degree greater than 3.
Now we consider $G$ where two sets of vertices $\{a,b\}$ and $\{z_1,z_2\}$ are exchanged.
Then in $G$ with the new labelling, $x < 3$.
But we've already completed the argument for all cases except the case $x=3$ and $z=2$.

\begin{figure}[h]
\includegraphics{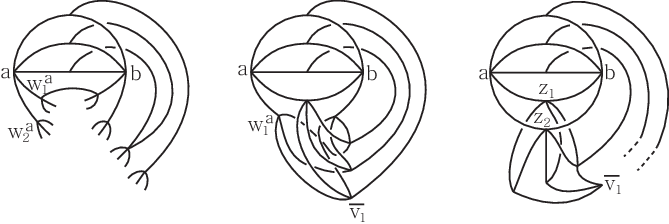}\\
\vspace{3mm}
(a) \hspace{35mm} (b) \hspace{35mm} (c)
\caption{Case of $x + y+ u^a = 3$ with $x = 3$}
\label{fig12}
\end{figure}

\section{Case of $\deg(a) = \deg(b) = 5$ and ${\rm dist}(a,b) = 1$} \label{sec:deg5dist1}

In this section we will consider the case that there are exactly two vertices $a$ and $b$
of degree 5 and their distance is 1.
So all the other vertices have degree 3 or 4.
We may assume that $|V_3(a)| \geq |V_3(b)|$.
Let $H$ be a subgraph of $G$ which consists of the edges of $E(V(a) \cup \{b\})$
and the  associated vertices.
Now we divide into five cases according to the number of vertices of $V_3(a)$.

\subsection{Case of $|V_3(a)|=0$}   \hspace{1cm}

In this case all eight vertices of $V(a)$ and $V(b)$ except $a$ and $b$ have degree 4.
Since $|H|=21$ and there is only one extra edge, $|[\overline{V}(H)]|=18$.
So we get $[\overline{V}(H)] = [4,4,4,3,3]$ or $[3,3,3,3,3,3]$.
Both cases are impossible because the set $V(b)$ has four vertices of degree 4 in $\overline{V}(H)$.

\subsection{Case of $|V_3(a)|=1$}   \hspace{1cm}

Let $c$ be the unique vertex of $V_3(a)$.
Since $V(a)$ has three vertices of degree 4,
$|H|=20$ and there are two extra edges $\overline{e}_1$ and $\overline{e}_2$.
Therefore $|[\overline{V}(H)]|=19$ and
$[\overline{V}(H)] = [4,4,4,4,3]$ or $[4,3,3,3,3,3]$.
The latter case is impossible because $V(b)$ has three or four vertices of degree 4.
In the former case, only $\overline{v}_5$ has degree 3.
First assume that $V(b)$ has four vertices of degree 4 and
so $\overline{v}_5$ is not a vertex of $V(b)$.
We can choose a vertex $d$ from $V(b)$
such that none of edges of $E(d)$ is adjacent to vertex $c$.
This implies that three edges of $E(d)$ must be adjacent to vertices of $V(a)$ with degree 4
or $\overline{v}_5$.
Then $NE(a,d) = 9$ and $NV_3(a,d) + |V_4(a,d)| = 4$, yielding $|\widehat{E}_{a,d}| \leq 9$.
Since $\widehat{G}_{a,d}$ still has a vertex of degree 4 in $V(b)$,
it is not homeomorphic to $K_{3,3}$.

Now we may say that $V(b)$ has three vertices of degree 4 and a vertex $d$ of degree 3
which is indeed $\overline{v}_5$.
Let $\overline{v}_1$ be the unique vertex of degree 4 in $\overline{V}(H)$ excluding $V(b)$.
Now we divide into three cases.
If $V(\overline{v}_1)$ contains $c$ and $d$ as in Figure \ref{fig13}(a),
then $V(b)$ contains a vertex $e$ such that three edges of $E(e)$ are adjacent to $V_4(a)$.
Therefore $|\widehat{E}_{a,e}| \leq 9$,
but $\widehat{G}_{a,e}$ still has vertex $\overline{v}_1$ of degree 4.
If $V(\overline{v}_1)$ contains only $c$ (or similarly only $d$) as in Figure \ref{fig13}(b),
then $NV_3(b,\overline{v}_1) + |V_4(b,\overline{v}_1)| = 4$
so that $|\widehat{E}_{b,\overline{v}_1}| \leq 9$.
But $\widehat{G}_{b,\overline{v}_1}$ still has a vertex of degree 4 in $V(a)$.
Finally assume that $V(\overline{v}_1)$ does not contain $c$ and $d$ as in Figure \ref{fig13}(c).
Let $c_1$ and $d_1$ be the vertices of $V_4(a)$ and $V_4(b)$ excluding $V(\overline{v}_1)$ respectively.
$V(c_1)$ must contains $d$; otherwise, $V(c_1)$ contains the three vertices of $V_4(b)$,
yielding $NV_3(b,c_1) + |V_4(b,c_1)| = 4$ so that $|\widehat{E}_{b,c_1}| \leq 9$.
But $\widehat{G}_{b,c_1}$ still has vertex $\overline{v}_1$ of degree 4.
Let $c_2$ and $c_3$ be the other vertices of $V_4(a)$.
Since $G$ is a triangle-free graph, one edge of $E(c_2)$ and one edge of $E(c_3)$
are adjacent to $d$, contradicting the fact that $d$ has degree 3.

\begin{figure}[h]
\includegraphics{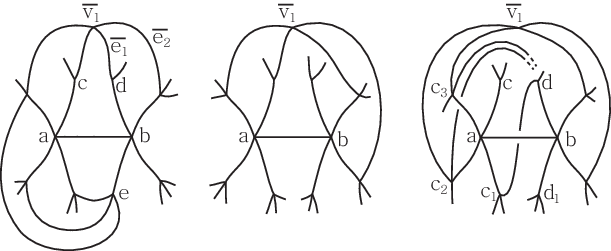}\\
\vspace{3mm}
(a) \hspace{27mm} (b) \hspace{28mm} (c)
\caption{Case of $|V_3(a)| = |V_3(b)| =1$}
\label{fig13}
\end{figure}

\subsection{Case of $|V_3(a)|=2$}   \hspace{1cm}

Cousin 94 of the $E_9+e$ family appears in this case.
Here $|H|=19$ and there are three extra edges.
Therefore $|[\overline{V}(H)]|=20$ and
$[\overline{V}(H)] = [4,4,4,4,4]$ or $[4,4,3,3,3,3]$.
In the former case, four vertices of $V(b)$ have degree 4.
Let $\overline{v}_5$ be the last vertex of $\overline{V}(H)$ excluding $V(b)$.
Then three extra edges must connect $\overline{v}_5$
with three different vertices, say $\overline{v}_1$, $\overline{v}_2$ and $\overline{v}_3$,
in $V(b)$ respectively.
Let $l$ be the non-extra edge of $E(\overline{v}_5)$ and
$c$ be the other vertex that $l$ is adjacent to.
If $c$ has degree 3,
then $NV_3(b,\overline{v}_5) + |V_4(b,\overline{v}_5)| = 4$
so that $|\widehat{E}_{b,\overline{v}_5}| \leq 9$.
But $\widehat{G}_{b,\overline{v}_5}$ still has vertex $a$ of degree 4.
If $c$ has degree 4,
then the remaining two edges of $E(c)$ must be adjacent to four vertices of $V(b)$.
But they cannot be adjacent to $\overline{v}_1$, $\overline{v}_2$ and $\overline{v}_3$
to avoid producing triangles in $G$.
So they are indeed adjacent to the same vertex $\overline{v}_4$ which is the remaining vertex in $V(b)$,
producing a bigon which is not allowed in $G$.

For the latter case $[4,4,3,3,3,3]$, obviously $|V_3(b)|=2$.
Let $c_1$, $c_2$, $c_3$ and $c_4$ be two vertices of degree 3
and the other two vertices of degree 4 of $V(a)$.
And similarly we define $d_1$, $d_2$, $d_3$ and $d_4$,
and let $\overline{v}_5$ and $\overline{v}_6$ denote the remaining vertices with degree 3 of
$\overline{V}(H)$ excluding $V(b)$.
See Figure \ref{fig14}(a).
First we will show that $V(c_1)$ (or $V(c_2)$) contains both $d_3$ and $d_4$,
and by the same reason $V(d_1)$ (or $V(d_2)$) contains both $c_3$ and $c_4$.
If $V(d_3)$ contains neither $c_1$ nor $c_2$,
then it must contain three vertices among $c_3$, $c_4$, $\overline{v}_5$ and $\overline{v}_6$.
Thus $NV_3(a,d_3) + |V_4(a,d_3)| \geq 5$ so that $|\widehat{E}_{a,d_3}| \leq 8$.
And if $V(d_3)$ contains only $c_1$ (or only $c_2$),
then similarly $NV_3(a,d_3) + |V_4(a,d_3)| \geq 4$ so that $|\widehat{E}_{a,d_3}| \leq 9$.
But $\widehat{G}_{a,d_3}$ still has vertex $d_4$ of degree 4 except
when there is an edge connecting $c_1$ and $d_4$ as we expect.
If $V(d_3)$ contains both $c_1$ and $c_2$,
still $V(d_4)$ contains $c_1$ or $c_2$ or both.
This implies that $V(c_1)$ (or $V(c_2)$) contains both $d_3$ and $d_4$.
Now consider the graph $\widehat{G}_{a,b}$.
Since $|\widehat{E}_{a,b}| \leq 9$,
the only exceptional case we handle is when it is homeomorphic to $K_{3,3}$.
In $\widehat{G}_{a,b}$,
dist$(c_3,c_4) =1$ and dist$(d_3,d_4) =1$, so that dist$(\overline{v}_5,\overline{v}_6) =1$
because of the property of the graph $K_{3,3}$.
Without loss of generality we may assume that
$\widehat{G}_{a,b}$ is the graph in Figure \ref{fig14}(b).

\begin{figure}[h]
\includegraphics{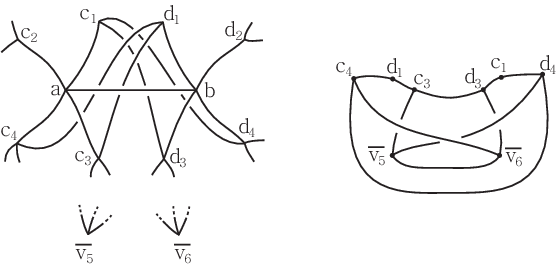}\\
\vspace{3mm}
\hspace{4mm} (a) \hspace{46mm} (b)
\caption{Case of $|V_3(a)| = |V_3(b)| =2$}
\label{fig14}
\end{figure}

Now we try to find proper locations for the two vertices $c_2$ and $d_2$ in $\widehat{G}_{a,b}$
so that we can figure out the original graph $G$ by adding $E(\{a,b\})$.
If $c_2$ lies on the edge connecting $c_3$ and $d_3$ (or similarly $c_4$ and $d_4$),
then $d_2$ must lie between $c_2$ and $c_3$ on the same edge preventing a triangle in $G$.
In this case $NV_3(c_3,d_4) = 4$ and $|V_Y(c_3,d_4)| = 1$, so that $|\widehat{E}_{c_3,d_4}| \leq 9$.
But $\widehat{G}_{c_3,d_4}$ contains vertex $b$ of degree 4.
If $c_2$ lies on the edge connecting $d_3$ and $\overline{v}_6$
(or similarly $d_4$ and $\overline{v}_5$),
then possible locations for $d_2$ are at the four dots shown in Figure \ref{fig15}(a).
For all four cases, $|\widehat{E}_{b,c_3}| \leq 9$
but $\widehat{G}_{b,c_3}$ still contains a triangle whose vertices are $a$, $c_1$ and $c_2$.
If $c_2$ lies on the edge connecting $c_3$ and $\overline{v}_5$
(or similarly $c_4$ and $\overline{v}_6$),
then $d_2$ must lie on the edge connecting $c_2$ and $c_3$.
Then similarly $|\widehat{E}_{a,d_3}| \leq 9$
but $\widehat{G}_{a,d_3}$ contains a triangle whose vertices are $a$, $d_1$ and $d_2$.
Finally we only need to consider the remaining case
that both $c_2$ and $d_2$ lie on the edge connecting $\overline{v}_5$ and $\overline{v}_6$
as in Figure \ref{fig15}(b), although
there is the possibility of switching their locations.
Cousin 94 of the $E_9+e$ family appears in either case.

\begin{figure}[h]
\includegraphics{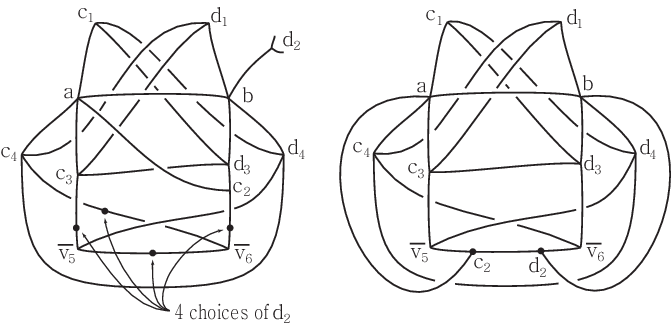}\\
\vspace{3mm}
(a) \hspace{52mm} (b)
\caption{Cousin 94 of the $E_9+e$ family}
\label{fig15}
\end{figure}

\subsection{Case of $|V_3(a)|=3$}   \hspace{1cm}

In this case $V(b)$ has at most one vertex of degree 3;
otherwise, $NE(a,b) = 9$ and $NV_3(a,b) \geq 5$, yielding $|\widehat{E}_{a,b}| \leq 8$.
Since $|H|=18$ and there are four extra edges, $|[\overline{V}(H)]|=21$
and $[\overline{V}(H)] = [4,4,4,3,3,3]$ or $[3,3,3,3,3,3,3]$.
Here the latter case is impossible because $V_3(b)$ has at most one vertex.
In the former case $V(b)$ contains three vertices of degree 4 and a vertex of degree 3.
Let $\overline{v}_5$ and $\overline{v}_6$ be the vertices of degree 3
in $\overline{V}(H)$ excluding $V(b)$.
There are at least two edges among the four extra edges
each of which connects a vertex of $V_4(b)$ and one of $\overline{v}_5$ or $\overline{v}_6$.
Let $c_1$ and $c_2$ be the vertices of $V_4(b)$  associated to two extra edges.
If $c_1 = c_2$, then $NV_3(a,c_1) \geq 5$ so that $|\widehat{E}_{a,c_1}| \leq 8$.
Thus $c_1$ and $c_2$ are different and let $c_3$ be the other vertex of $V_4(b)$.
In this case $NV_3(a,c_1) \geq 4$ so that $|\widehat{E}_{a,c_1}| \leq 9$.
Except for the case in Figure \ref{fig16},
$\widehat{G}_{a,c_1}$ has a vertex of degree 4 which is $c_2$ or $c_3$.
For the exceptional case,
similarly $|\widehat{E}_{a,c_2}| \leq 9$
but $\widehat{G}_{a,c_2}$ contains a triangle whose vertices are $b$, $c_1$ and $c_3$.
Thus it is not homeomorphic to $K_{3,3}$.

\begin{figure}[h]
\includegraphics{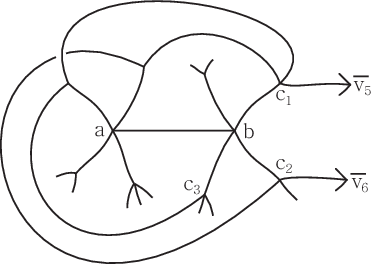}\\
\caption{Case of $|V_3(a)| = 3$ and $|V_3(b)| =1$}
\label{fig16}
\end{figure}

\subsection{Case of $|V_3(a)|=4$}   \hspace{1cm}

In this case $V_3(b)$ must be the empty set;
otherwise, $NV_3(a,b) \geq 5$ so that $|\widehat{E}_{a,b}| \leq 8$.
Since $|H|=17$ and there are five extra edges, $|[\overline{V}(H)]|=22$
and $[\overline{V}(H)] = [4,4,4,4,3,3]$ or $[4,3,3,3,3,3,3]$.
The latter case is impossible because the set $V_3(b)$ is empty.
In the former case all four vertices of $V(b)$ except $a$ have degree 4.
Let $\overline{v}_5$ and $\overline{v}_6$ be the vertices of degree 3 in $\overline{V}(H)$
which are obviously not in $V(b)$.
There is a vertex $c$ in $V(b)$ so that $E(c)$ has an extra edge connecting $c$ and
one of $\overline{v}_5$ or $\overline{v}_6$.
Then $NV_3(a,c) \geq 5$ so that $|\widehat{E}_{a,c}| \leq 8$.

\end{document}